\def\mathunderaccent#1#2 {\let\theaccent#1\skewfactor#2
\mathpalette\putaccentunder}
\def\putaccentunder#1#2{\oalign{$#1#2$\crcr\hidewidth
\vbox to.2ex{\hbox{$#1\skew\skewfactor\theaccent{}$}\vss}\hidewidth}}
\def\name{\mathunderaccent\tilde-3 }
\newcommand{\forces}{\Vdash} 
\newcommand{\rest}{{\restriction}}
\newcommand{\dom}{{\rm dom}}
\newcommand{\cA}{{\mathcal A}}
\newcommand{\cF}{{\mathcal F}}
\newcommand{\cK}{{\mathcal K}}
\newcommand{\bbP}{{\mathbb P}}
\newcommand{\nbQ}{{\name{\bbQ}}}
\newcommand{\bbQ}{{\mathbb Q}}
\newcommand{\bV}{{\mathbf V}}
\newcommand{\cZ}{{\mathcal Z}}
\newtheorem{theorem}{Theorem}[section]
\newtheorem{proposition}[theorem]{Proposition}
\theoremstyle{definition}
\newtheorem{definition}[theorem]{Definition}
\newtheorem{hypothesis}[theorem]{Hypothesis}
\theoremstyle{remark}
\begin{document}

\title[Explicit example]{Explicit example of collapsing $\kappa^+$ in
  iteration of $\kappa$--proper forcings}

\author{Andrzej Ros{\l}anowski}
\address{Department of Mathematics\\
University of Nebraska at Omaha\\
Omaha, NE 68182-0243, USA}
\email{aroslanowski@unomaha.edu}

\subjclass{Primary 03E40; Secondary: 03E05}
\date{August 03, 2018}

\begin{abstract}
We give an example of iteration of length $\omega$ of
$(<\kappa)$--complete $\kappa^+$--cc forcing notions with the limit
collapsing $\kappa^+$. The construction is decoded from the proof of
Shelah \cite[Appendix, Theorem 3.6(1)]{Sh:f}.
\end{abstract}

\maketitle 

\section{Introduction}
Since 1980s it has been known that there is no straightforward
generalization of properness to the context of iterations with
uncountable supports.  The cannonical reason for that situation was
the failure of club uniformization for colorings on ladder systems
given by Shelah in \cite[Appendix]{Sh:b}, \cite[Appendix, Theorem
3.6(2)]{Sh:f}. 

In a series of papers \cite{RoSh:655, RoSh:860, RoSh:777, RoSh:888,
  RoSh:890, RoSh:942, RoSh:1001}, Ros{\l}anowski and Shelah 
presented several properties of $({<}\kappa)$--strategically
  complete forcing notions implying that their $\kappa$--support
  iterations do not collapse $\kappa^+$. Those properties were
  carefully crafted to ``cover'' nice forcing notions without
  colliding with the bad example of a uniformization forcing. The need
  for the carefull work was typically justified by saying that
  {\em some iteration of uniformization forcings must fail properness\/} 
  without actually spelling out any detailed example. 

Martin Goldstern \cite{mg2017} asked me if I know a simple example of
an $\omega$--step iteration of $(<\kappa)$--complete $\kappa^+$--cc
forcing notions with the limit collapsing $\kappa^+$. My answer then 
was that there cannot be any very simple example, becasue of the works
mentioned above. I was not correct. The purpose of this note is to
give such an explicit, relatively simple,  example. The argument given here
is actually included in some form in the proof of Shelah \cite[Appendix,
Theorem 3.6(1)]{Sh:f}. But the advantage of writing it down explicitly is
that, unlike  \cite[Appendix, Theorem 3.6(2)]{Sh:f}, the argument
applies to inaccessible $\kappa$ as well. So, while for innaccessible
$\kappa$ the theory of $\kappa$--support iterations appear to be much richer
(and easier), we still have to work hard to ensure properness of the limit.   
\medskip

\noindent{\bf Notation}:\qquad Our notation is rather standard and
compatible with that of classical textbooks (like Jech
\cite{J}). However, in forcing we keep the older convention that {\em
  a stronger condition is the larger one}. 

\begin{enumerate}
\item Ordinal numbers will be denoted be the lower case initial letters of
  the Greek alphabet $\alpha,\beta,\gamma,\delta$. Finite ordinals
  (non-negative integers) will be denoted by $k,n$.
\item The letter $\kappa$ will denote a regular uncountable cardinal such
  that $\kappa=\kappa^{<\kappa}$.
\item For a set $\cA$, the family of all subsets of $\cA$ of size $\kappa$
  is denoted $[\cA]^\kappa$ and the family of all sequences of length
  $<\kappa$ with values in $\cA$ is called ${}^{\kappa>}\cA$.
\item For a forcing notion $\bbP$, all $\bbP$--names for objects in
  the extension via $\bbP$ will be denoted with a tilde below (e.g.,
  $\name{\tau}$, $\name{X}$), and $\name{G}_\bbP$ will stand for the
  canonical $\bbP$--name for the generic filter in $\bbP$.
\end{enumerate}

\section{The example}

For the rest of this note we keep the following assumptions.

\begin{hypothesis}
\label{bashyp}  
We assume that:
\begin{enumerate}
\item $\kappa$ is an uncountable cardinal satisfying $\kappa^{<\kappa}
  =\kappa$, and 
\item $\cF\subseteq {}^\kappa\kappa$ is a family of size $\kappa^+$. 
\end{enumerate}
\end{hypothesis}

\begin{definition}
  \label{forcing}
  \begin{enumerate}
\item An {\em $\cF$--coloring\/} is a sequence $\bar{H}=\langle
  H_f:f\in\cF\rangle$  such that $H_f:\kappa\longrightarrow
  {}^{\kappa>} \kappa$ (for $f\in\cF$).  
\item For an $\cF$--coloring $\bar{H}$ we define a forcing notion
  $\bbQ(\bar{H})$ as follows.  

\noindent{\bf A condition in $\bbQ(\bar{H})$}\quad is a tuple $p=(\gamma^p,
e^p, v^p,u^p,h^p)$ such that 
\begin{enumerate}
\item[(a)] $\gamma^p<\kappa$, $e^p\subseteq \gamma^p+1$ is a closed set with
  $\max(e^p)= \gamma^p$,  
\item[(b)]  $v^p\in [\cF]^{<\kappa}$, $u^p=\{f\rest \alpha: \alpha\in e^p\ 
  \wedge \ f\in v^p\}$, 
\item[(c)] if $f,g\in v$ are distinct, then $f\rest\gamma^p\neq
  g\rest\gamma^p$,   
\item[(d)] $h^p:u^p\longrightarrow  {}^{\kappa>} \kappa$.
\end{enumerate}
\noindent{\bf The order $\leq$ of $\bbQ(\bar{H})$}\quad is such that $p\leq 
q$ if and only if 
\begin{enumerate}
\item[(i)] $\gamma^p\leq \gamma^q$ and $e^p=e^q\cap (\gamma^p+1)$, and  
\item[(ii)] $v^p\subseteq v^q$ (so also $u^p\subseteq u^q$) and $h^p\subseteq
  h^q$, and 
\item[(iii)] if $f\in v^p$ and $\alpha\in e^q\setminus e^p$, then $h^q(f\rest
  \alpha) =H_f(\alpha)$. 
\end{enumerate}
\end{enumerate}
\end{definition}

\begin{proposition}
\label{basicobs}
Assume $\kappa=\kappa^{<\kappa}$ is an uncountable cardinal and let
$\bar{H}$ be an $\cF$--coloring.
\begin{enumerate}
\item $\big(\bbQ(\bar{H}),\leq\big)$ is indeed a partial order. 
\item The forcing notion $\bbQ(\bar{H})$ is $({<}\kappa)$--complete and  
$|\bbQ(\bar{H})|=\kappa^+$. 
\item The forcing notion $\bbQ(\bar{H})$ satisfies the $\kappa^+$--chain  
condition (as a matter of fact, it has the $\kappa^+$--Knaster property), so
it is also $\kappa$--proper (in the standard sense).  
\end{enumerate}
\end{proposition}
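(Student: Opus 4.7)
The plan is to dispatch (1) and (2) by direct inspection and spend the real effort on the Knaster property in (3).

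For (1), reflexivity and antisymmetry are automatic; for transitivity of $p\leq q\leq r$, clauses (i) and (ii) are monotone, and for clause (iii) one picks $f\in v^p$ and $\alpha\in e^r\setminus e^p$ and splits into the sub-case $\alpha\in e^q\setminus e^p$ (use (iii) for $p\leq q$ together with $h^q\subseteq h^r$) and the sub-case $\alpha\in e^r\setminus e^q$ (use $f\in v^p\subseteq v^q$ and (iii) for $q\leq r$). For (2), the cardinality bound follows by counting parameters: $\kappa$ for $\gamma^p$, $\kappa$ for $e^p$, at most $|\cF|^{<\kappa}=\kappa^+$ for $v^p$, and, these being fixed, at most $\kappa^{<\kappa}=\kappa$ functions $h^p$. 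For $(<\kappa)$--completeness, given an increasing $\langle p_\xi:\xi<\delta\rangle$ with $\delta<\kappa$, I would set $\gamma^q=\sup_\xi\gamma^{p_\xi}$, $e^q=\bigcup_\xi e^{p_\xi}\cup\{\gamma^q\}$, $v^q=\bigcup_\xi v^{p_\xi}$, and extend every $h^{p_\xi}$ by declaring $h^q(f\rest\gamma^q)=H_f(\gamma^q)$ at the single new level. Closure of $e^q$ uses that the $e^{p_\xi}$ form a chain under (i); $|v^q|<\kappa$ uses regularity; clause (c) at $q$ follows by picking $\xi$ with $f,g\in v^{p_\xi}$ and invoking (c) at $p_\xi$; and clause (iii) for $p_\xi\leq q$ is built into the definition of $h^q$.

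For (3), given $\{p_\xi:\xi<\kappa^+\}$, I would first thin out in three stages: pigeonhole on $(\gamma^{p_\xi},e^{p_\xi})$ (at most $\kappa$ possibilities) to stabilise them at a common $(\gamma^*,e^*)$; apply the $\Delta$--system lemma---available since $\kappa^{<\kappa}=\kappa$---to the sets $u^{p_\xi}\subseteq{}^{\kappa>}\kappa$, each of size $<\kappa$, to extract a $\kappa^+$--subfamily with common root $u^*$; pigeonhole again (at most $\kappa^{<\kappa}=\kappa$ possibilities) to make $h^{p_\xi}\rest u^*$ equal to a fixed $h^*$.

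The core step is then showing that any two $p_\xi,p_\eta$ from the refined family are compatible. I would construct an upper bound $r$ by first choosing $\gamma^r<\kappa$ strictly above $\gamma^*$ and above every splitting ordinal of distinct $f,g\in v^{p_\xi}\cup v^{p_\eta}$ (possible since there are $<\kappa$ such pairs and $\kappa$ is regular), then setting $e^r=e^*\cup\{\gamma^r\}$, $v^r=v^{p_\xi}\cup v^{p_\eta}$, and defining $h^r$ to agree with $h^{p_\xi}$ on $u^{p_\xi}$, with $h^{p_\eta}$ on $u^{p_\eta}$, and with $H_f(\gamma^r)$ at each new $f\rest\gamma^r$. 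The key subtlety---and the reason for performing the $\Delta$--system on $u^p$ rather than on $v^p$---is well-definedness of $h^r$ on $u^{p_\xi}\cap u^{p_\eta}$: this intersection sits inside the root $u^*$, where $h^{p_\xi}$ and $h^{p_\eta}$ both agree with $h^*$. Clause (c) at the new top level holds by the choice of $\gamma^r$, and clause (iii) for $p_\xi\leq r$ and $p_\eta\leq r$ is built into the definition of $h^r$ at $\gamma^r$; the remaining axioms are then routine.
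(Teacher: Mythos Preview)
Your argument is correct and follows essentially the same route as the paper. The one simplification worth noting is in (3): since $u^p\in[{}^{\kappa>}\kappa]^{<\kappa}$ and $h^p:u^p\to{}^{\kappa>}\kappa$, there are only $\kappa^{<\kappa}=\kappa$ possible values of the entire quadruple $(\gamma^p,e^p,u^p,h^p)$, so the paper stabilises all four by a single pigeonhole step on a set of size $\kappa^+$ --- the $\Delta$-system lemma is never invoked, and the compatibility of $h^{p_\xi}$ with $h^{p_\eta}$ is then automatic.
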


\begin{proof}
(2)\quad To show the completeness of $\bbQ(\bar{H})$, suppose that
$\bar{p}=\langle  p_\alpha:\alpha< \delta\rangle\subseteq
\bbQ(\bar{H})$ is increasing, $\delta<\kappa$ is limit. 

If $\gamma^{p_\alpha}$ are eventually constant, say
$\gamma^{p_\alpha}=\gamma^{p_{\alpha^*}}$ for $\alpha^*\leq\alpha<\delta$,
then also $e^{p_\alpha}=e^{p_{\alpha^*}}$ for
$\alpha^*\leq\alpha<\delta$. Set $\gamma=\gamma^{p_{\alpha^*}}$, $e=
e^{p_{\alpha^*}}$, $v=\bigcup\{v^{p_{\alpha}}:\alpha<\delta\}$,
$u=\bigcup\{u^{p_{\alpha}}:\alpha<\delta\}$ and
$h=\bigcup\{h^{p_{\alpha}}:\alpha<\delta\}$. Easily, $(\gamma,e,v,u,h)\in
\bbQ(\bar{H})$ is an upper bound to $\bar{p}$. 

Otherwise we may assume without loss of generality that the sequence
$\langle \gamma^{p_\alpha}:\alpha<\delta\rangle$ is strictly increasing. Let
$\gamma=\sup\big(\gamma^{p_\alpha}:\alpha<\delta\big)$, 
$e=\bigcup\big\{e^{p_\alpha}:\alpha<\delta\big\}\cup\{\gamma\}$,
$v=\bigcup\big\{v^{p_\alpha}:\alpha<\delta\big\}$,
$u=\bigcup\big\{u^{p_\alpha}:\alpha<\delta\big\}\cup\{f\rest\gamma:f\in v\}$
and note that these objects satisfy demands \ref{forcing}(2)(a--c). Thus, in
particular, for distinct $f,g\in v$ we have $f\rest \gamma\neq
g\rest\gamma$. Hence may define $h:u\longrightarrow {}^{\kappa>}\kappa$ so
that $h^{p_\alpha}\subseteq h$ for all $\alpha<\delta$ and
$h(f\rest\gamma)=H_f(\gamma)$ for $f\in v$.  Easily, $(\gamma,e,v,u,h)\in
\bbQ(\bar{H})$ is an upper bound to $\bar{p}$. 
\medskip

\noindent (3)\quad Suppose that $\langle p_\alpha:\alpha <\kappa^+
\rangle$ is a sequence of distinct conditions from
$\bbQ(\bar{H})$. Since $\kappa^{<\kappa}=\kappa$ we may pick 
$\cA\subseteq \kappa^+$ of size $\kappa^+$ and $\gamma,e,u, h$ such that for
each $\alpha\in \cA$: 
\[\gamma^{p_\alpha}=\gamma,\quad e^{p_\alpha}=e,\quad u^{p_\alpha}=u
\quad\mbox{  and }\quad h^{p_\alpha}=h.\]
Suppose now $\alpha<\beta$ are from $\cA$. Set $v_0=v^{p_\alpha}\cup
v^{p_\beta}$ and choose $\gamma_0$ such that $\gamma<\gamma_0<\kappa$ and 
\[(\forall f,g\in v_0)(f\neq g\ \Rightarrow f\rest \gamma_0\neq g\rest
\gamma_0).\]
Then put $e_0=e\cup\{\gamma_0\}$ and $u_0=u\cup \{f\rest \gamma_0: f\in
v\}$. Plainly, we may now define $h_0:u_0\longrightarrow {}^{\kappa>}\kappa$
so that $h^{p_\alpha}= h^{p_\beta}\subseteq h_0$ and $h_0(f\rest\gamma_0)= 
H_f(\gamma_0)$ for $f\in v_0$.   

Easily, $(\gamma,e,v,u,h)\in \bbQ(\bar{H})$ is a condition stronger than
both $p_\beta$ and $p_\alpha$.
\end{proof}

\begin{definition}
\label{namedef}
Let $\bar{H}$ be an $\cF$--coloring. We define $\bbQ(\bar{H})$--names
$\name{E}$ and $\name{h}$ by
\[\forces_{\bbQ(\bar{H})}\mbox{`` } \name{E}=\bigcup\big\{e^p:p\in
  \name{G}_{\bbQ(\bar{H})}\big\}\ \wedge\ \name{h}=\bigcup\big\{h^p:p\in 
  \name{G}_{\bbQ(\bar{H})}\big\}\mbox{ ''}.\]
\end{definition}

\begin{proposition}
\label{nameprop}
\begin{enumerate}
\item $\forces_{\bbQ(\bar{H})}\mbox{`` } \name{E}\subseteq \kappa$ is a club
  of $\kappa$ ''. 
\item $\forces_{\bbQ(\bar{H})}$`` $\name{h}$ is a function with domain
  $\bigcup\big\{u^p:p\in \name{G}_{\bbQ(\bar{H})}\big\}$ and values in
  ${}^{\kappa>}\kappa$ and such that for each $f\in\cF$, for some
  $\alpha^f<\kappa$, if $\alpha\in \name{E}\setminus \alpha^f$, then $f\rest 
  \alpha\in \dom(\name{h})$ and $\name{h}(f\rest\alpha)=H_f(\alpha)$ ''. 

\end{enumerate}
\end{proposition}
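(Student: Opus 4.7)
For part (1), the inclusion $\name{E}\subseteq\kappa$ is immediate from clause (a) of Definition \ref{forcing}(2). For unboundedness I would show that for every $\beta<\kappa$ the set $\{p\in\bbQ(\bar{H}):\gamma^p\geq\beta\}$ is dense: given $p$, pick $\gamma_0>\max(\beta,\gamma^p)$ large enough that $f\rest\gamma_0\neq g\rest\gamma_0$ for all distinct $f,g\in v^p$ (possible since $|v^p|<\kappa$, $\kappa$ is regular, and any two distinct $f,g\in{}^\kappa\kappa$ differ on some bounded initial segment); then extend $p$ to $q$ with $\gamma^q=\gamma_0$, $e^q=e^p\cup\{\gamma_0\}$, $v^q=v^p$, $u^q=u^p\cup\{f\rest\gamma_0:f\in v^p\}$, and $h^q$ extending $h^p$ by $h^q(f\rest\gamma_0)=H_f(\gamma_0)$, as required by clause (iii). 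For closedness, suppose $\delta<\kappa$ is a limit of $\name{E}$; using unboundedness pick $p\in\name{G}_{\bbQ(\bar{H})}$ with $\gamma^p>\delta$. The key observation is $\name{E}\cap(\gamma^p+1)=e^p$: any $\alpha\in\name{E}$ comes from some $e^q$ with $q\in\name{G}_{\bbQ(\bar{H})}$, and for $\alpha\leq\gamma^p$ a common upper bound $r\geq p,q$ in the generic yields $\alpha\in e^r\cap(\gamma^p+1)=e^p$ by clause (i). Since $e^p$ is closed in $\gamma^p+1$ and $\delta$ is a limit of $e^p\cap\delta$, we conclude $\delta\in e^p\subseteq\name{E}$.

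For part (2), the equality $\dom(\name{h})=\bigcup\{u^p:p\in\name{G}_{\bbQ(\bar{H})}\}$ is built into the definition, and the range lies in ${}^{\kappa>}\kappa$ because each $h^p$ does. For single-valuedness, two pairs $(x,y_1),(x,y_2)$ in $\name{h}$ come from conditions $p_1,p_2\in\name{G}_{\bbQ(\bar{H})}$, and a common upper bound $r\in\name{G}_{\bbQ(\bar{H})}$ of $p_1,p_2$ carries $h^{p_1}\cup h^{p_2}\subseteq h^r$, so $y_1=h^r(x)=y_2$. For the main clause, I would first show that for each $f\in\cF$ the set $D_f=\{p:f\in v^p\}$ is dense. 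Given $p_0$, if $f\in v^{p_0}$ there is nothing to do; otherwise, apply the unboundedness argument to pass to $p_0'\geq p_0$ with $\gamma^{p_0'}$ so large that $f\rest\gamma^{p_0'}\neq g\rest\gamma^{p_0'}$ for every $g\in v^{p_0}$. Then form $q$ by $v^q=v^{p_0'}\cup\{f\}$, $\gamma^q=\gamma^{p_0'}$, $e^q=e^{p_0'}$, adjoining the restrictions $f\rest\alpha$ (for $\alpha\in e^{p_0'}$) to $u^{p_0'}$, and extending $h^{p_0'}$ on these new elements by arbitrary values in ${}^{\kappa>}\kappa$; clause (iii) imposes no constraint here, since $e^q\setminus e^{p_0'}=\emptyset$.

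With density of $D_f$ in hand, fix $f\in\cF$ and $p\in\name{G}_{\bbQ(\bar{H})}$ with $f\in v^p$, and set $\alpha^f=\gamma^p+1$. For any $\alpha\in\name{E}$ with $\alpha\geq\alpha^f$, pick $q\in\name{G}_{\bbQ(\bar{H})}$ with $\alpha\in e^q$ and a common upper bound $r\in\name{G}_{\bbQ(\bar{H})}$ of $p$ and $q$. Then $\alpha\in e^r$ and $\alpha>\gamma^p=\max(e^p)$, so $\alpha\in e^r\setminus e^p$; clause (iii) applied to $p\leq r$ with $f\in v^p$ now gives $f\rest\alpha\in u^r\subseteq\dom(\name{h})$ and $\name{h}(f\rest\alpha)=h^r(f\rest\alpha)=H_f(\alpha)$, which is exactly the statement to be forced.

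The main conceptual obstacle is the closedness of $\name{E}$: it relies on exploiting clause (i) to see that later conditions in the generic cannot insert new elements of $e$ at or below $\gamma^p$, so the trace of $\name{E}$ on $\gamma^p+1$ is exactly $e^p$ and closedness of the club reduces to closedness of $e^p$. The density step for $D_f$ is also mildly fiddly, since clause (c) forces one to enlarge $\gamma^p$ first so that $f\rest\gamma^p$ separates from the other functions in $v^p$ before $f$ can be legally adjoined; once that is arranged, the fact that the added pairs $(f\rest\alpha,\cdot)$ for $\alpha\in e^{p_0'}$ are unconstrained by clause (iii) (because no new element of $e$ is being introduced) makes the construction of $q$ straightforward.
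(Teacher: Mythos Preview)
Your proof is correct and follows essentially the same route as the paper: the paper merely observes that the sets $\cZ_\alpha^f=\{p\in\bbQ(\bar{H}):f\in v^p\wedge\alpha<\gamma^p\}$ are open dense and declares that the assertions follow from the definition of the order, whereas you have carefully unpacked exactly those details (splitting $\cZ_\alpha^f$ into your unboundedness density and the density of $D_f$, and spelling out the closedness and single-valuedness arguments). One harmless redundancy: in your unboundedness step you need not choose $\gamma_0$ large enough to separate the $f\rest\gamma_0$, since clause~(c) for $p$ already gives $f\rest\gamma^p\neq g\rest\gamma^p$ and hence $f\rest\gamma_0\neq g\rest\gamma_0$ for any $\gamma_0>\gamma^p$.
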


\begin{proof}
First note that for every $f\in \cF$ and $\alpha<\kappa$ the set 
\[\cZ_\alpha^f= \big\{p\in\bbQ(\bar{H}): f\in v^p\ \wedge\
  \alpha<\gamma^p\big\}\]
is open dense in $\bbQ(\bar{H})$. Then the assertions easily follow by
the definition of the forcing (particularly, by the definition of the order). 
\end{proof}

\begin{theorem}
\label{main}
Assume $\kappa,\cF$ are as in Hypothesis \ref{bashyp}. Then there is an
iteration $\langle\bbP_n,\nbQ_n:n<\omega\rangle$ such that  
  \begin{enumerate}
\item for each $n<\omega$, for some $\bbP_n$--name $\name{\bar{H}}_n$ we 
  have
\[\forces_{\bbP_n}\mbox{`` }\name{\bar{H}}_n\mbox{ is an $\cF$--coloring
  and }\nbQ_n=\bbQ(\name{\bar{H}}_n)\mbox{ '',}\]
so also 
\[\forces_{\bbP_n}\mbox{`` $\nbQ_n$ satisfies $\kappa^+$--cc '',}\]
\item the full support limit $\bbP_\omega$ collapses $\kappa^+$ while
  preserving all cardinalities and cofinalities $\leq\kappa$.
  \end{enumerate}
\end{theorem}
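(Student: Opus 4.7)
The plan has three parts. First, I would handle the easy preservation: since $\omega<\kappa$, a full--support iteration of $\omega$ many $(<\kappa)$--complete forcings is itself $(<\kappa)$--complete, hence adds no new sequences of length $<\kappa$ of ordinals and so preserves all cardinalities and cofinalities $\leq\kappa$. Together with Proposition~\ref{basicobs}(3), this settles clause (1) and the preservation part of (2).

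Second, working in $V$ with an enumeration $\cF=\{f_\xi:\xi<\kappa^+\}$ and a fixed bijection between $\kappa$ and ${}^{\kappa>}\kappa$, I would recursively define $\bbP_n$--names $\name{\bar{H}}_n$ for $\cF$--colorings, allowing each $\name{\bar{H}}_n$ to depend on the previously added generics $\name{h}_0,\dots,\name{h}_{n-1}$ (and on the associated clubs $\name{E}_0,\dots,\name{E}_{n-1}$). The goal is that the values $\name{H}^n_f(\alpha)$ accumulate, as $n<\omega$ varies, enough information to tell each $f\in\cF$ apart from every other element.

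Third, in $V^{\bbP_\omega}$ I would exhibit a name $\name{\Phi}$ for an injection $\cF\to\kappa\times({}^{\kappa>}\kappa)^\omega$. Since $\cf(\kappa)>\omega$, the intersection $\bigcap_{n<\omega}\name{E}_n$ is a club of $\kappa$, and for every $f\in\cF$ one can pick $\alpha_f$ in it above $\sup_n\alpha^f_n$; the value $\name{\Phi}(f)$ is
\[
\Phi(f)\ =\ \bigl(\alpha_f,\ \langle\name{h}_n(f\rest \alpha_f):n<\omega\rangle\bigr).
\]
Because $\bbP_\omega$ is $(<\kappa)$--complete, the set ${}^{\kappa>}\kappa$ is unchanged and $({}^{\kappa>}\kappa)^\omega$ still has cardinality $\kappa^\omega=\kappa$ in $V^{\bbP_\omega}$; composing with a $V$--bijection this gives an injection into $\kappa$. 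Hence injectivity of $\Phi$ forces $|\cF|\leq\kappa$ in $V^{\bbP_\omega}$, i.e.\ the collapse of $(\kappa^+)^V$.

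The main obstacle is the recursive construction of Part~2. By cardinality no such $\Phi$ can exist in $V$, so the generics must do the separation. Mimicking \cite[Appendix, Theorem~3.6(1)]{Sh:f}, the name $\name{\bar{H}}_n$ at stage $n$ has to be built diagonally against all possible ``collisions'' $\Phi(f)=\Phi(g)$ that are still compatible with stages $<n$. The delicate point is simultaneously verifying that the resulting $\bbP_n$--name really is an $\cF$--coloring (so that Proposition~\ref{basicobs} applies and each $\nbQ_n$ is forced $\kappa^+$--cc, giving clause (1)) and that at the limit the trace $\langle\name{h}_n(f\rest\alpha_f):n<\omega\rangle$ separates distinct $f\in\cF$.
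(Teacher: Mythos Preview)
Your overall shape is right, but Part~2 is left as a black box and you have misplaced where the real work lies.

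You describe the colorings $\name{\bar{H}}_n$ as having to be ``built diagonally against all possible collisions $\Phi(f)=\Phi(g)$ that are still compatible with stages $<n$'', and you flag this as the main obstacle. A kill-one-collision-per-stage diagonalisation cannot work here: there are $\kappa^+$ many pairs $(f,g)$ and only $\omega$ stages. The paper's construction is instead completely explicit and uniform---one simply sets
\[
\forces_{\bbP_n}\ \name{H}^n_f(\alpha)\;=\;f\rest\min\Big(\bigcap_{k<n}\name{E}^k\setminus(\alpha+1)\Big),
\]
i.e.\ record $f$ up to the next point of the intersection of the previously added clubs. There is nothing delicate in checking this is a $\bbP_n$--name for an $\cF$--coloring; Part~2 is in fact trivial once you see the formula.

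The real content is hidden in your Part~3, which you treat as routine. Your $\Phi$ is not obviously injective, and no ``diagonal'' choice of the $\bar H_n$ alone can make it so: if $\alpha_f=\alpha_g=\alpha$ and $f\rest\alpha=g\rest\alpha$ then $\Phi(f)=\Phi(g)$ automatically, whatever the colorings are, since $\Phi(f)$ depends only on $\alpha_f$ and $f\rest\alpha_f$. What actually drives the collapse is a \emph{bootstrap} specific to the colorings above: if $f\rest\alpha=g\rest\alpha$ at some $\alpha\in E=\bigcap_n(\name{E}^n)^G$ above all thresholds, then for each $n$ one has
\[
f\rest\beta_n=(\name{H}^n_f)^G(\alpha)=(\name{h}_n)^G(f\rest\alpha)=(\name{h}_n)^G(g\rest\alpha)=(\name{H}^n_g)^G(\alpha)=g\rest\beta_n,
\]
with $\beta_n=\min\big(\bigcap_{k<n}(\name{E}^k)^G\setminus(\alpha+1)\big)$, so $f\rest\beta=g\rest\beta$ at $\beta=\sup_n\beta_n\in E$, $\beta>\alpha$. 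The paper packages this as a maximality contradiction: assuming $|\cF|^{\bV[G]}=\kappa^+$, pigeonhole gives distinct $f,g$ with $\alpha^f_*=\alpha^g_*=\alpha^*$ and $f\rest\alpha^*=g\rest\alpha^*$; then $C=\{\alpha\in E:f\rest\alpha=g\rest\alpha\}$ is nonempty, closed and bounded, yet the bootstrap applied at $\alpha^+=\max(C)$ produces a strictly larger element of $C$. This step is the heart of the proof and is absent from your plan.
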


\begin{proof}
We define $\nbQ_n$ and $\name{\bar{H}}_n$ by induction on $n<\omega$.

We start with letting $H^0_f(\alpha)=\langle\rangle$ (for $f\in\cF$ and
$\alpha<\kappa$). Then put $\bar{H}_0=\langle H^0_f: f\in \cF\rangle$ and
$\bbQ_0=\bbQ(\bar{H}_0)$.  

Suppose we have defined $\bbP_k,\nbQ_k, \name{\bar{H}}_k$ for $k<n$ and 
$\forces_{\bbP_k} \nbQ_k=\bbQ(\name{\bar{H}}_k)$. Let $\bbP_n=
\bbP_{n-1}*\nbQ_{n-1}$. It should be clear that the forcing notion
$\bbP_n$ preserves cofinalities and cardinalities and forces
$\kappa^{<\kappa}=\kappa$. By our choice of $\nbQ_k$ for each $k<n$ and we
may choose $\bbP_n$--name $\name{E}^k$ such that  
\[\forces_{\bbP_n}\mbox{`` } \name{E}^k\subseteq \kappa\mbox{ is the 
    club added by the $k$-th coordinate forcing (cf \ref{namedef},
    \ref{nameprop}) ''.}\] 
Now suppose that $f\in \cF$. Let $\name{H}^n_f$ be a $\bbP_n$--name for a
function from $\kappa$ to ${}^{\kappa>}\kappa$ such that   
\[\forces_{\bbP_n}  \name{H}^n_f(\alpha)=f\rest\min\big( \bigcap_{k<n}
  \name{E}^k\setminus (\alpha+1)\big)\quad\mbox{ for each
  }\alpha<\kappa.\]  
(Remember, $\name{E}^k$ are names for clubs of $\kappa$). Then
we let $\name{\bar{H}}_n=\langle \name{H}^n_f:f\in\cF \rangle$ and
$\nbQ_n=\bbQ(\name{\bar{H}}_n)$.  
\medskip

Now, the limit $\bbP_\omega$ is $({<}\kappa)$--complete, so it
preserves cardinalities and cofinalities $\leq \kappa$. Also
$\forces_{\bbP_\omega} \kappa^{<\kappa}=\kappa$. We will argue 
that $\bbP_\omega$ collapses $\kappa^+$ by showing that
$\forces_{\bbP_\omega} |\cF|=\kappa$.

For $n<\omega$ let $\name{E}^n$ be a $\bbP_{n+1}$--name for a club of
$\kappa$ as above and $\name{h}_n$ be a $\bbP_{n+1}$--name for a a partial
function from ${}^{\kappa>}\kappa$ to ${}^{\kappa>}\kappa$ added on $n$-th
coordinate (cf \ref{namedef}, \ref{nameprop}).   

Assume $G\subseteq \bbP_\omega$ is generic over $\bV$ and let us work
in $\bV[G]$. 

Suppose towards contradiction that $|\cF|=\kappa^+$. For each $n<\omega$,
the set $\big(\name{E}^n\big)^G$ is a club of $\kappa$, and therefore the
intersection $E=\bigcap\limits_{n<\omega}\big(\name{E}^n\big)^G$ is also a  
club of $\kappa$.  By \ref{nameprop}(2), for every $f\in\cF$ and $n\in
\omega$ we may fix $\alpha^f_n\in E$ such that 
\begin{enumerate}
\item[$(\heartsuit)$] $\displaystyle\Big(\forall \alpha\in
  (\name{E}^n)^G\setminus \alpha^f_n\Big) \Big(f\rest\alpha\in
  \dom\big((\name{h}_n)^G\big) \ \wedge\ (\name{h}_n)^G(f\rest\alpha)=
  (\name{H}_f^n)^G (\alpha)\Big)$,
\end{enumerate}
and we let $\alpha^f_*=\sup(\alpha^f_n:n<\omega)\in E$. Since
$|\cF|=\kappa^+$, we may find {\em distinct\/} $f,g\in\cF$ such that 
\[\alpha^f_*=\alpha_*^g=\alpha^*\mbox{ and }\quad f\rest \alpha^*=g\rest
\alpha^*.\]  
The set $C\stackrel{\rm def}{=}\{\alpha\in E:f\rest\alpha=g\rest\alpha\}$ is
nonempty (as $\alpha^*\in C$), closed and bounded. Let $\alpha^+=\max(C)$
and for $n<\omega$ let $\beta_n=\min\Big(\bigcap\limits_{k<n}
\big(\name{E}^k\big)^G\setminus (\alpha^++1)\Big)$. By the definition of
$\name{H}^n_f,\name{H}^n_g$ and $(\heartsuit)$ we know that for each $n$:
\[f\rest\beta_n= \big(\name{H}^n_f\big)^G(\alpha^+)= \big(\name{h}_n\big)^G
  (f\rest\alpha^+) = \big(\name{h}_n\big)^G (g\rest\alpha^+) = 
\big(\name{H}^n_g\big)^G(\alpha^+)=g\rest\beta_n.\] 
Let $\beta=\sup(\beta_n:n<\omega)$. Then $\beta\in E$, $\beta>\alpha^+$ and
$f\rest \beta=g\rest \beta$, a contradiction. 
\end{proof}

Assuming $\kappa,\cF$ are as in Hypothesis \ref{bashyp}, we may use
the proof of Theorem \ref{main} to argue that the following statement
is {\bf not} true:
\begin{quotation}
{\em for every $\cF$--coloring $\bar{H}$ there are a club $E\subseteq
  \kappa$ and a function $h: {}^{\kappa>}\kappa
  \longrightarrow {}^{\kappa>}\kappa $ such that 
\[\big(\forall f\in \cF\big)\big(\exists \alpha_f<\kappa\big)\big(
  \forall \alpha\in E\setminus
  \alpha_f\big)\big(h(f\rest\alpha)=H_f(\alpha)\big).\] 
}
\end{quotation}
Consequently, if $\cK$ is a class of forcing notions including all
forcings of the form $\bbQ(\bar{H})$ (for an $\cF$--coloring
$\bar{H}$), then the Forcing Axiom ${\mathbf{FA}}_\kappa(\cK)$
fails. This is yet another example of limitations on possible
extensions of Martin Axiom.


\end{document}